\newtheorem*{thm*}{\ref{0}.8~Theorem}
\newtheorem*{lem*}{Lemma}
\newtheorem*{1c}{\ref{1}c~Lemma}
\newtheorem*{1d}{\ref{1}d~Lemma}
\newtheorem*{6c}{\ref{6}c~Theorem}
\newtheorem*{4a}{\ref{4}a~Theorem}
\newtheorem*{4b}{\ref{4}b~Theorem}
\newtheorem{thm}{Theorem}
\newtheorem{lem}[thm]{Lemma}
\newtheorem*{7d}{\ref{7}d~Lemma}
\newtheorem*{7e}{\ref{7}e~Theorem}
\newtheorem*{7f}{\ref{7}f~Proposition}
\newtheorem*{7h}{\ref{7}h~Proposition}
\newtheorem*{7i}{\ref{7}i~Proposition}
\newtheorem*{8d}{\ref{8}d~Proposition}
\newtheorem*{8e}{\ref{8}e~Theorem}
\newtheorem*{8f}{\ref{8}f~Theorem}
\theoremstyle{definition}
\newtheorem{sec0}[thm]{Introduction}
\newtheorem{rem}[thm]{Remark}
\newtheorem{se}[thm]{}
\newtheorem*{0.01}{\ref{0}.1}
\newtheorem*{0.02}{\ref{0}.2}
\newtheorem*{0.03}{\ref{0}.3}
\newtheorem*{0.04}{\ref{0}.4}
\newtheorem*{0.05}{\ref{0}.5}
\newtheorem*{0.06}{\ref{0}.6}
\newtheorem*{0.07}{\ref{0}.7}
\newtheorem*{0.09}{\ref{0}.9~Remark}
\newtheorem*{1a}{\ref{1}a}
\newtheorem*{1b}{\ref{1}b}
\newtheorem*{2a}{\ref{2}a}
\newtheorem*{2b}{\ref{2}b}
\newtheorem*{6a}{\ref{6}a}
\newtheorem*{6b}{\ref{6}b}
\newtheorem*{7a}{\ref{7}a}
\newtheorem*{7b}{\ref{7}b}
\newtheorem*{7c}{\ref{7}c}
\newtheorem*{7g}{\ref{7}g}
\newtheorem*{8a}{\ref{8}a}
\newtheorem*{8b}{\ref{8}b}
\newtheorem*{8c}{\ref{8}c}
\newcommand{\K}{\boldsymbol{K}}
\newcommand{\C}{\boldsymbol{C}}
\journal{XXX}
\begin{document}

\begin{frontmatter}
\title{On recurrence in zero-dimensional locally compact flow with compactly generated phase group}

\author{Xiongping Dai}
\ead{xpdai@nju.edu.cn}
\address{Department of Mathematics, Nanjing University, Nanjing 210093, People's Republic of China}

\begin{abstract}
Let $X$ be a zero-dimensional locally compact Hausdorff space not necessarily metric and $G$ a compactly generated topological group not necessarily abelian or countable. We define recurrence at a point for any continuous action of $G$ on $X$, and then, show that if $\overline{Gx}$ is compact for all $x\in X$, the conditions (i) this dynamics is pointwise recurrent, (ii) $X$ is a union of $G$-minimal sets, (iii) the $G$-orbit closure relation is closed in $X\times X$, and (iv) $X\ni x\mapsto \overline{Gx}\in 2^X$ is continuous, are pairwise equivalent. Consequently, if this dynamics is distal, then it is equicontinuous.
\end{abstract}

\begin{keyword}
Recurrence $\cdot$ Distality $\cdot$ Zero dimensional flow $\cdot$ Compactly generated group

\medskip
\MSC[2010] 37B05, 54H15
\end{keyword}
\end{frontmatter}
\begin{sec0}\label{0}
Let $(G,X)$ be a \textit{flow} with phase group $G$ and with phase space $X$, which is in the following sense unless stated otherwise:
\begin{enumerate}[\textcircled{a}]
\item[\textcircled{a}] $G$ is a Hausdorff semi-topological group, namely: $G$ is a group with a Hausdorff topology under which $G\times G\xrightarrow{(s,t)\mapsto st}G$ is jointly continuous but $G\xrightarrow{t\mapsto t^{-1}}G$ need not continuous;
\item[\textcircled{b}] $X$ is a locally compact Hausdorff space; and moreover,
\item[\textcircled{c}] there is an action of $G$ on $X$, denoted $G\times X\xrightarrow{(t,x)\mapsto tx}X$ such that
\begin{enumerate}[(i)]
\item  Continuity: $(t,x)\mapsto tx$ is jointly continuous;
\item Transformation group: $ex=x$ and $(st)x=s(tx)$ for all $x\in X$ and $s,t\in G$, where $e$ is the identity of $G$;
\item Poisson stability: $Gx$ is relatively compact (i.e. $\overline{Gx}$ is compact) in $X$ for all $x\in X$.
\end{enumerate}
\end{enumerate}
By \textcircled{b} there is a compatible uniformity structure, denoted $\mathscr{U}$, on $X$. In the sequel, $\mathfrak{N}_x$ stands for the neighborhood filter at $x$ of $X$.

\begin{0.01}
Let $Y$ be a Hausdorff space and $2^Y$ the collection of non-empty closed subsets of $Y$. For a net $\{Y_i\}$ in $2^Y$ and $K\in 2^Y$, we say `$Y_i\rightharpoonup K$ in $2^Y$' if
$$K=\{y\in Y\,|\,\exists\textrm{ a subnet }\{Y_{i_k}\}\textrm{ from }\{Y_i\}\textrm{ and }y_{i_k}\in Y_{i_k}\textrm{ s.t. }y_{i_k}\to y\}.$$
\begin{lem*}
If $K_i\rightharpoonup K$ in $2^Y$ and $\{K_{i_n}\}$ is any subnet of $\{K_i\}$ with $K_{i_n}\rightharpoonup K^\prime$, then $K^\prime\subseteq K$.
\end{lem*}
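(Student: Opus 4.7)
The plan is essentially just to unravel the definition of $\rightharpoonup$ and observe that a subnet of a subnet is a subnet.

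First, I would pick an arbitrary $y\in K^\prime$. By the definition applied to $K_{i_n}\rightharpoonup K^\prime$, there exist a subnet $\{K_{i_{n_m}}\}$ of $\{K_{i_n}\}$ and elements $y_{i_{n_m}}\in K_{i_{n_m}}$ such that $y_{i_{n_m}}\to y$ in $Y$.

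Next I would use the standard fact that a subnet of a subnet of $\{K_i\}$ is itself a subnet of $\{K_i\}$. Thus $\{K_{i_{n_m}}\}$ is a subnet of the original net $\{K_i\}$, and $y_{i_{n_m}}\in K_{i_{n_m}}$ with $y_{i_{n_m}}\to y$. Applying the definition of $K_i\rightharpoonup K$ in the reverse direction (i.e. the containment built into the set-equality defining $\rightharpoonup$), we conclude $y\in K$.

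Since $y\in K^\prime$ was arbitrary, $K^\prime\subseteq K$. There is essentially no obstacle here beyond being pedantic about subnets; the only thing to keep honest is that the indexing notation is suggestive but a genuine subnet must be given by a cofinal, order-preserving (or, more generally, final) map of directed sets, so one should simply record that composing two such maps yields another.
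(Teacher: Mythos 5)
Your proof is correct and is precisely the intended argument: the paper states this lemma without proof, treating it as immediate from the definition of $\rightharpoonup$ together with the fact that a subnet of a subnet is a subnet, which is exactly what you spelled out.
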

If $Y$ is discrete and $\{Y_i\}_{i=1}^\infty$ with $Y_i\rightharpoonup K$, then $K=\bigcap_{n\ge1}\bigcup_{i\ge n}Y_i=\limsup_{i\to\infty}Y_i$; in other words, $K=\{Y_i,\ \textrm{i.o.}\}$.
\end{0.01}

\begin{0.02}
Associated to $(G,X)$ we define the orbit closure mapping $\mathscr{O}_{G,X}\colon X\rightarrow 2^X$ by $x\mapsto \overline{Gx}$. Then we say:
\begin{enumerate}[a)]
\item $\mathscr{O}_{G,X}$ is $\ulcorner$\!continuous\!$\urcorner$, provided that if $x_i\to x$ in $X$ and $\overline{Gx_i}\rightharpoonup\mathfrak{O}$ in $2^X$, then $\overline{Gx}=\mathfrak{O}$.
\item $\mathscr{O}_{G,X}$ is upper semi-continuous, provided that for every $x\in X$ and every neighborhood $V$ of $\overline{Gx}$ there is a $U\in\mathfrak{N}_x$ such that $\overline{Gy}\subseteq V$ for all $y\in U$.
\end{enumerate}
Clearly, if $\mathscr{O}_{G,X}$ is upper semi-continuous, then $\mathscr{O}_{G,X}$ is $\ulcorner$\!continuous\!$\urcorner$. However, no converse is valid in general.
Given $U\subset X$, define 
\begin{enumerate}[c)]
\item $U_G^*=\{x\in X\,|\,\overline{Gx}\subseteq U\}$. 
\end{enumerate}
If $U$ is closed, then $U_G^*=\bigcap_{g\in G}gU$ is closed.
\end{0.02}

\begin{0.03}
The `orbit closure relation $R_o$ of $(G,X)$' is defined by $(x,y)\in R_o$ if $y\in\overline{Gx}$. It is not difficult to prove that
\begin{enumerate}[a)]
\item \textit{$\mathscr{O}_{G,X}$ is $\ulcorner$\!continuous\!$\urcorner$ iff $R_o$ is closed in $X\times X$. (Warning: $R_o$ closed $\not\Rightarrow$ $\mathscr{O}_{G,X}$ upper semi-continuous in general.)}
\end{enumerate}
Indeed, suppose $\mathscr{O}_{G,X}$ is $\ulcorner$\!continuous\!$\urcorner$, and, let $R_o\ni(x_i,y_i)\to(x,y)$. Then $\overline{Gx_i}\rightharpoonup\overline{Gx}$ by \ref{0}.2 and $y_i\in\overline{Gx_i}$ with $y_i\to y$. So $y\in\overline{Gx}$ by \ref{0}.1 and $(x,y)\in R_o$. Thus $R_o$ is closed. Conversely, assume $R_o$ is closed, and, let $x_i\to x$ in $X$ and $\overline{Gx_i}\rightharpoonup\mathfrak{O}$ in $2^X$. If $\overline{Gx}\not=\mathfrak{O}$, then $\overline{Gx}\varsubsetneq\mathfrak{O}$. For $y\in\mathfrak{O}\setminus\overline{Gx}$ there is a net $y_{i_k}\in \overline{Gx_{i_k}}$ with $y_{i_k}\to y$. Since $(x_{i_k},y_{i_k})\in R_o$, so $(x,y)\in R_o$ and $y\in \overline{Gx}$ contrary to $y\in\mathfrak{O}\setminus\overline{Gx}$. Thus $\overline{Gx}=\mathfrak{O}$.
\begin{enumerate}[b)]
\item {\it If $X$ is compact here, then $\mathscr{O}_{G,X}$ is $\ulcorner$\!continuous\!$\urcorner$ iff it is upper semi-continuous iff it is continuous iff $R_o$ is closed.}
\end{enumerate}
\end{0.03}

\begin{0.04}
We say that a set $K$ in $X$ is `minimal' under $(G,X)$ if $\overline{Gx}=K$ for all $x\in K$. In this case $K$ is a closed invariant subset of $X$, and moreover, $K$ is compact by (iii).
\begin{lem*}
Let $R_o$ be the orbit closure relation of $(G,X)$. Suppose $y\in X$ such that $R_o[y]$ is closed: if $(x_n,y)\in R_o$ with $x_n\to x$, then $(x,y)\in R_o$. Then $\overline{Gy}$ is minimal.
\end{lem*}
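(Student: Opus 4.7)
The plan is to fix an arbitrary $z \in \overline{Gy}$ and show $\overline{Gz} = \overline{Gy}$, which is exactly the definition of minimality given in \ref{0}.4. One inclusion comes almost for free: since each left translation $x \mapsto gx$ is continuous by joint continuity of the action, one has $g\overline{Gy} \subseteq \overline{gGy} = \overline{Gy}$ for every $g \in G$, so $\overline{Gy}$ is a closed $G$-invariant set; as it already contains $z$, it must contain $\overline{Gz}$.

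For the reverse inclusion $\overline{Gy} \subseteq \overline{Gz}$ it suffices to verify $y \in \overline{Gz}$, because $\overline{Gz}$ is itself closed and $G$-invariant by the same argument. I would pick a net $(t_i)$ in $G$ with $t_i y \to z$ and then observe that for every $i$, $y = t_i^{-1}(t_i y) \in G(t_i y) \subseteq \overline{G(t_i y)}$; equivalently, $(t_i y, y) \in R_o$, so $t_i y \in R_o[y]$. Since $R_o[y]$ is closed by hypothesis and $t_i y \to z$, we conclude $z \in R_o[y]$, i.e.\ $y \in \overline{Gz}$, which is what was needed.

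No single step really qualifies as an obstacle; the only point deserving a moment's care is that $G$ is only assumed to be semi-topological, so inversion $t \mapsto t^{-1}$ need not be continuous. This does not matter here because $t_i^{-1}$ enters the argument purely as an algebraic element of the group moving $t_i y$ back to $y$, not as a continuous function of $t_i$. Thus the weakening from topological to semi-topological group in the standing hypotheses causes no difficulty for this lemma.
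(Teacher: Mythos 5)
Your proof is correct. The key mechanism is the same one the paper uses --- take a net $t_iy\to z$, observe that each $t_iy$ lies in $R_o[y]$ because $y=t_i^{-1}(t_iy)\in G(t_iy)$, and invoke the closedness hypothesis to get $y\in\overline{Gz}$ --- but you deploy it differently. The paper first extracts, via Zorn's lemma, a point $x\in\overline{Gy}$ with $\overline{Gx}$ minimal (using compactness of $\overline{Gy}$), and only then applies the trick to a net $t_ny\to x$ to conclude $y\in\overline{Gx}$, hence $\overline{Gy}=\overline{Gx}$. You instead apply the trick to an arbitrary $z\in\overline{Gy}$ and verify the definition of minimality directly at every point. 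This buys you something real: your argument needs neither Zorn's lemma nor the compactness of $\overline{Gy}$ (compactness is only used in the paper to guarantee that a minimal subset exists), so it would survive even without the standing Poisson-stability hypothesis (iii). The paper's version is shorter to state given that the existence of minimal sets in compact invariant sets is standard, but yours is the more elementary and more self-contained argument. Your closing remark about inversion not needing to be continuous is also on point and is exactly the care required under the semi-topological hypothesis.
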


\begin{proof}
Since $\overline{Gy}$ is compact, there is a point $x\in\overline{Gy}$ such that $\overline{Gx}$ is minimal. There is a net $t_n\in G$ with $t_ny\to x$. Since $(t_ny,y)\in R_o$, so $(x,y)\in R_o$ and $y\in\overline{Gx}$. Thus $\overline{Gy}=\overline{Gx}$ is minimal.
\end{proof}
\end{0.04}

\begin{0.05}
A subset $A$ of $G$ is `syndetic' in $G$ if there exists a compact subset $K$ of $G$ with $G=KA$. A set $B$ is said to be `thick' in $G$ if for all compact set $K$ in $G$ there corresponds an element $t\in B$ such that $Kt\subseteq B$. It is a well-known fact that $A$ is syndetic in $G$ if and only if it intersects non-voidly with every thick subset of $G$.
\begin{enumerate}[a)]
\item A point $x\in X$ is `almost periodic' (a.p) under $(G,X)$ if $N_G(x,U)=\{t\in G\,|\,tx\in U\}$ is a syndetic subset of $G$ for all $U\in\mathfrak{N}_x$.
Since $\overline{Gx}$ is compact by (iii) here, $x\in X$ is a.p if and only if $\overline{Gx}$ is minimal under $(G,X)$ (cf., e.g.~\cite[Note~2.5.1]{AD}). Thus, $N_G(x,U)$ is discretely syndetic in $G$ for every a.p point $x$ of $(G,X)$.

\item Following \cite{GH} we say that:
\begin{enumerate}[1)]
\item $(G,X)$ is `weakly a.p' if given $\alpha\in\mathscr{U}$ there is a finite subset $F$ of $G$ such that $Ftx\cap\alpha[x]\not=\emptyset$ for all $x\in X$ and $t\in G$.
\item $(G,X)$ is `locally weakly a.p' if given $\alpha\in\mathscr{U}$ and $x\in X$ there is a finite subset $F$ of $G$ and some $U\in\mathfrak{N}_x$ such that $Fty\cap\alpha[y]\not=\emptyset$ for all $y\in U$ and $t\in G$.
\end{enumerate}
Clearly, if $(G,X)$ is locally weakly a.p, it is pointwise a.p.
\end{enumerate}
\end{0.05}

\begin{0.06}
A Hausdorff space is said to be \textit{zero dimensional} if it has a base consisting of clopen sets. For example, the coset spaces are locally compact zero-dimensional in a totally disconnected locally compact topological group.

Recall that $G$ is \textit{compactly generated}, provided that there exists a compact subset $\Gamma$ of $G$, called a \textit{generating set}, such that
$G=\bigcup_{r=1}^\infty\Gamma^r$. In this case, $G$ is $\sigma$-compact and we may assume $e\in \Gamma=\Gamma^{-1}$. In this case, the generating set is not unique; e.g., $t^{-1}\Gamma t$, for $t\in G$, is so. 
\end{0.06}

\begin{0.07}
In this note we consider the relationships of the following important dynamics of $(G,X)$:
\begin{enumerate}[(1)]
\item The pointwise ``recurrence'' in certain sense (cf.~Def.~\ref{2}).
\item The pointwise almost periodicity.
\item $X$ is a union of minimal sets.
\item $R_o$ is closed.
\item $\mathscr{O}_{G,X}$ is $\ulcorner$\!continuous\!$\urcorner$.
\item $\mathscr{O}_{G,X}$ is continuous.
\item The local weak almost periodicity.
\end{enumerate}
It is evident that $(6)\Rightarrow(5)\Leftrightarrow(4)\Rightarrow(3)\Leftrightarrow(2)\Rightarrow(1)$. However, even for $G=\mathbb{Z}$ or $\mathbb{R}$ and $X$ a compact metric space, that ``$(1)\Rightarrow(5)$'' is not generally true is shown by very simple examples.

Note that if $(G,X)$ is distal and $(G,X\times X)$ satisfies (5), then it is not difficult to prove that the regionally proximal relation of $(G,X)$ is equal to $\Delta_X$, the diagonal of $X\times X$, so that $(G,X)$ is equicontinuous \cite{E58} (cf.~Theorem~\ref{6}c below). In McMahon and Wu \cite{MW76}, there is an example of a countable group $G$ acting on a zero-dimensional compact $X$ such that $(G,X)$ is distal non-equicontinuous. So in this example, $(G,X\times X)$, with zero-dimensional compact phase space $X\times X$ and with countable (so $\sigma$-compact) phase group $G$, satisfies that $(1)\not\Rightarrow(5)$ and $(3)\not\Rightarrow(5)$. Thus we need impose some more restrictive condition on $G$ for $(1)\textrm{ or }(3)\Rightarrow(5)$.

Let $X$ be a zero-dimensional compact space. In \cite[Theorems~5 and 6]{G46}, Gottschalk shows $(1)\Rightarrow(5)\Leftrightarrow(7)$ for $\mathbb{Z}_+$-flow on $X$, and see also \cite[Theorem~7.12]{AD} for $\mathbb{Z}$-flow on $X$.

Recall that $G$ is \textit{generative} provided that $G$ is abelian and generated by some compact neighborhood of $e$ (cf.~\cite[Def.~6.01]{GH}). Using ``replete semigroup'', in \cite{GH} Gottschalk and Hedlund formulate a definition of recurrence. Then, with this notion, they prove that $(1) \Rightarrow (5)\Leftrightarrow(7)$ for each flow with generative phase group and with zero-dimensional phase space (cf.~\cite[Theorems~7.07 and 7.08]{GH}).

In \cite{AGW}, Auslander et al. introduce a definition of recurrence in terms of ``cone''. With their notion of recurrence, they show that $(1) \Rightarrow (5)$ for every flow $(G,X)$ with finitely generated phase group $G$ not necessarily abelian and with zero-dimensional compact metric phase space $X$ (cf.~\cite[Theorem~1.8]{AGW}).
\end{0.07}

The purpose of this note is to formulate a recurrence and extend the theorem of Auslander et al. \cite[Theorem~1.8]{AGW} to any flow $(G,X)$ with compactly generated phase group $G$ not necessarily countable or abelian, and, with zero-dimensional phase space $X$ not necessarily compact metrizable, as follows:

\begin{thm*}[see Thm.~\ref{4}a]
Let $(G,X)$ be a flow, where $G$ is compactly generated and $X$ is zero dimensional. Then the following conditions are pairwise equivalent:
\begin{enumerate}[(1)]
\item The pointwise ``recurrence'' in certain sense (cf.~Def.~\ref{2}b).
\item The pointwise almost periodicity.
\item $X$ is a union of minimal sets.
\item $R_o$ is closed.
\item $\mathscr{O}_{G,X}$ is $\ulcorner$\!continuous\!$\urcorner$.
\item $\mathscr{O}_{G,X}$ is continuous.
\item For every compact open subset $U$ of $X$, $U_G^*$ is compact open in $X$.
\item $(G,X)$ is locally weakly a.p.
\end{enumerate}
\end{thm*}

Comparing with the case $X$ compact, we have to face the essential point that a net in $X$ need not have a convergent subnet. In addition, comparing with the case $G=\mathbb{Z}_+$, for any point $x$ and any closed subset $U$ of $X$ we have no the ``first visit time'' $t\in G$ with $tx\in U$.

\begin{0.09}
It should be mentioned that a part of Theorem~\ref{0}.8 has been proved by Reid \cite{R2} under the framework that $G$ is ``equicontinuously generated'' by a subset $S$ of $\textrm{Homeo}\,(X)$; that is, $G=\bigcup_{r\ge1} S^r$, where $\textrm{id}_X\in S=S^{-1}$, and $S$ is equicontinuous (cf.~Def.~\ref{6}b). However, our approaches are different completely with and much simpler than Reid \cite[Theorems~1.2, 1.3]{R2} of using very technical arguments of compact-open invariant sets. It turns out that Theorem~\ref{0}.8 concisely proved here implies the ``equicontinuously generated'' case of Reid~\cite{R2} (see Theorems~\ref{8}e and \ref{8}f for alternative simple proofs of Reid's results).
\end{0.09}
\end{sec0}

\begin{se}[Word length and cone]\label{1}
Let $G$ be compactly generated with a generating set $\Gamma$ such that $e\in\Gamma=\Gamma^{-1}$ as in Def.~\ref{0}.6.

\begin{1a}
For any $g\in G$ with $g\not=e$, the \textit{$\Gamma$-length of} $g$, denoted $|g|$, is the smallest integer $r$ such that $g=\gamma_1\dotsm\gamma_r$ with $\gamma_i\in\Gamma$ for $1\le i\le r$, and, write $\K(g)=\Gamma^{|g|-1}\cdot g$, where $\Gamma^0=\{e\}$ and $\K(g)$ is a compact subset of $G$. Clearly, $e\notin\K(g)$ for $g\in G$ with $|g|\ge1$.
\end{1a}

\begin{1b}
A subset $\C$ of $G$ is called a \textit{$\Gamma$-cone} in $G$ if there exists a net $\{g_i\}$ in $G$ such that $|g_i|\nearrow\infty$ and $\K(g_i)\rightharpoonup\C$ in $2^G$. Here $|g_i|\nearrow\infty$ means that if $i\le i^\prime$ then $|g_i|\le|g_{i^\prime}|$.
Clearly, every cone in $G$ is a closed set in $G$. Moreover, if $G$ is discrete (i.e. finitely generated), then $e\notin\C$.
\end{1b}

\begin{1c}[{cf.~\cite[Prop.~1.2 and Prop.~1.5]{AGW} for $G$ finitely generated}]
Let $G$ be compactly generated with a generating set $\Gamma$. Then:
\begin{enumerate}[$(1)$]
\item If $\{g_i\}$ is a net in $G$ with $|g_i|\nearrow\infty$, then $\K(g_i)\rightharpoonup\C\not=\emptyset$.
\item If $\C$ is a $\Gamma$-cone in $G$, then $\C$ is a discretely thick subset of $G$.
\end{enumerate}
\end{1c}

\begin{proof}
Let $\{g_i\}$ be a net in $G$ with $|g_i|\nearrow\infty$ and $\K(g_i)\rightharpoonup\C$. Let $K$ be a finite subset of $G$ with $e\in K$. We shall find an element $t\in G$ with $Kt\subseteq\C$. Since $G$ is compactly generated with the generating set $\Gamma$, there is an integer $r\ge1$ such that $K\subseteq\Gamma^r$. For each $i$, let $\ell_i=|g_i|$ ($\nearrow\infty$) and write
$$
g_i=\gamma_1\dotsm\gamma_{\ell_i-r-1}\gamma_{\ell_i-r}\dotsm\gamma_{\ell_i}\in\Gamma^{\ell_i},\quad\alpha_i=\gamma_{\ell_i-r}\dotsm\gamma_{\ell_i}\in\Gamma^{r+1}.
$$
Then $K\alpha_i\subseteq\K(g_i)$ for $i$ with $\ell_i>r$. Since $\Gamma^{r+1}$ is compact, we can assume (a subnet of) $\alpha_i\to t$ in $G$. Thus $t\in\C$ and $Kt\subseteq\C\not=\emptyset$ by Lemma~\ref{0}.1. This also proves that $\C$ is a discretely thick subset of $G$. The proof is completed.
\end{proof}

From the choice of $\alpha_i$ in the above proof of Lemma~\ref{1}c we can easily conclude the following lemma:

\begin{1d}[{cf.~\cite[Lem.~3.6]{R2}}]
Let $F$ be any finite subset of $G$. Then there is an integer $n>1$ such that for all $g\in G$ with $|g|\ge n$, there is an element $t\in G$ such that $|t|=n$ and $Ft\subseteq\K(g)$.
\end{1d}
\end{se}

\begin{se}[Recurrence]\label{2}
Let $(G,X)$ be a flow with a compactly generated phase group $G$ and $x\in X$. We will define recurrence at $x$ in two ways.
\begin{2a}
Let $\Gamma$ be a generating set of $G$. We say that $x$ is \textit{$\Gamma$-recurrent of type I} under $(G,X)$ if $\C x\cap U\not=\emptyset$ for all $\Gamma$-cone $\C$ in $G$ and every $U\in\mathfrak{N}_x$. See \cite[Def.~1.6]{AGW} for $G$ finitely generated and $X$ a compact metric space in terms of sequence.

We say that $x$ is \textit{recurrent of type I} under $(G,X)$ if $x$ is $\Gamma$-recurrent of type I for all generating set $\Gamma$ of $G$. Clearly, if $x$ is recurrent of type I, then $tx$ is recurrent of type I for every $t\in G$.
\end{2a}

\begin{2b}
Let $\Gamma$ be a generating set of $G$. We say that $x$ is \textit{$\Gamma$-recurrent of type II} under $(G,X)$ if for each $U\in\mathfrak{N}_x$ and every net $\{g_i\}$ in $G$ with $|g_i|\to\infty$ there exists an integer $n\ge1$ such that there is a subnet $\{g_{i_j}\}$ from $\{g_i\}$ and $c_j\in\K(g_{i_j})$ with $c_jx\in U$ and $|c_j|\le n$. See \cite[Def.~1.1]{R2}.
\end{2b}
\end{se}

\begin{lem}\label{3}
Let $(G,X)$ be a flow, where $G$ is compactly generated with a generating set $\Gamma$, and, let $x\in X$. Then:
\begin{enumerate}[$(1)$]
\item If $x$ is an a.p point, then $x$ is $\Gamma$-recurrent of type II (cf.~\cite[Prop.~3.7(iv)]{R2}).
\item If $x$ is $\Gamma$-recurrent of type II, then $x$ is $\Gamma$-recurrent of type I.
\end{enumerate}
\end{lem}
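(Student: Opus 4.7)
The plan for (1) is to combine compactness of the minimal orbit closure $\overline{Gx}$ with Lemma~\ref{1}d. Given $U \in \mathfrak{N}_x$, I shrink $U$ to an open subneighborhood $V$. Minimality says every $y \in \overline{Gx}$ can be sent into $V$ by some $g_y \in G$, so the open sets $\{g^{-1}V : g \in G\}$ cover the compact set $\overline{Gx}$; extract a finite $F \subseteq G$ such that for every $y \in \overline{Gx}$ some $f \in F$ has $fy \in V$. Now, given a net $\{g_i\}$ in $G$ with $|g_i| \to \infty$, I invoke Lemma~\ref{1}d on this $F$ to get an integer $n_1$ and, for each $i$ with $|g_i| \geq n_1$, an element $t_i \in G$ with $|t_i| = n_1$ and $Ft_i \subseteq \K(g_i)$. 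Since $t_i x \in \overline{Gx}$, pick $f_i \in F$ with $f_i(t_i x) \in V$ and set $c_i = f_i t_i$. Then $c_i \in Ft_i \subseteq \K(g_i)$, $c_i x \in V \subseteq U$, and $|c_i| \le \max_{f \in F}|f| + n_1 =: n$, independent of $i$. Passing to the tail with $|g_i| \ge n_1$ supplies the required subnet.

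The plan for (2) is more direct. Let $\C$ be the $\Gamma$-cone coming from $\K(g_i) \rightharpoonup \C$ with $|g_i| \nearrow \infty$, and let $U \in \mathfrak{N}_x$. Since $X$ is locally compact Hausdorff, pick a closed neighborhood $V \in \mathfrak{N}_x$ with $V \subseteq U$. Applying type II recurrence to $V$ and $\{g_i\}$ gives $n \ge 1$, a subnet $\{g_{i_j}\}$, and $c_j \in \K(g_{i_j})$ with $|c_j| \le n$ and $c_j x \in V$. The $c_j$ all lie in the compact set $\Gamma^n$, so I pass to a further subnet $c_{j_k} \to c$ in $G$. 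By the definition of $\rightharpoonup$ in~\ref{0}.1, applied to the subnet $\K(g_{i_{j_k}})$ of $\K(g_i)$, we have $c \in \C$, and by joint continuity of the action $c_{j_k} x \to cx$. Since $V$ is closed, $cx \in V \subseteq U$, so $\C x \cap U \neq \emptyset$.

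The main obstacle lies in part (1): the naive approach of writing $g_i = k_i a_i$ with $a_i \in N_G(x,U)$ and $k_i$ in a compact section forces $|a_i|$ to grow linearly with $|g_i|$, which defeats the bounded-length requirement of type II. The decisive trick is to reverse the order of operations: first fix a finite $F$ (from the open cover of the minimal set $\overline{Gx}$), then use Lemma~\ref{1}d to locate a short translate $Ft_i$ inside the large set $\K(g_i)$, and finally let an element of $F$ ``correct'' $t_i x$ into $V$; the resulting $c_i = f_i t_i$ has length bounded by $\max_F|f| + n_1$, independent of $i$.
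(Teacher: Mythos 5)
Your proof is correct and takes essentially the same route as the paper's: in (1) you fix a finite set $F$ witnessing the discrete syndeticity of the return-time set (you re-derive this via an open cover of the compact minimal set $\overline{Gx}$ rather than citing \ref{0}.5-a)), then use Lemma~\ref{1}d to plant a short translate $Ft_i$ inside $\K(g_i)$ and correct by an element of $F$, exactly as in the paper. Part (2) is the paper's argument verbatim: pass to a closed subneighborhood, use the bound $|c_j|\le n$ to place the $c_j$ in the compact set $\Gamma^n$, extract a convergent subnet, and identify the limit as a point of $\C$.
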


\begin{proof}
(1). Let $U\in\mathfrak{N}_x$ and simply write $A=N_G(x,U)$. Then $A$ is a discretely syndetic subset of $G$ by \ref{0}.5-a). So there is a finite set $F$ in $G$ such that $F^{-1}A=G$, or equivalently, $Ft\cap A\not=\emptyset$ for all $t\in G$. Let $n_1=\max\{|f|+1\,|\,f\in F\}$. Let $\{g_i\}$ be a net in $G$ with $|g_i|\to\infty$.
By Lemma~\ref{1}d, there exists an integer $n>n_1$ such that there is a subnet $\{g_{i_j}\}$ from $\{g_i\}$ such that for each $j$, there is an element $t_j\in G$ with $Ft_j\subseteq\K(g_{i_j})$ and $|t_j|=n$. Take $f_j\in F$ with $c_j=f_jt_j\in A$, then $|c_j|\le2n$ and $c_jx\in U$ for all $j$. Thus $x$ is $\Gamma$-recurrent of type II.

(2). Suppose $x$ is $\Gamma$-recurrent of type II. Let $\C$ be a cone in $G$. That is, $\K(g_i)\rightharpoonup\C$, where $\{g_i\}$ is a net in $G$ with $|g_i|\nearrow\infty$. Let $U, V\in\mathfrak{N}_x$ with $\overline{V}\subseteq U$. By Def.~\ref{2}b, there is an integer $n\ge1$ and a subnet $\{g_{i_j}\}$ from $\{g_i\}$ and $c_j\in\K(g_{i_j})$ such that $c_jx\in V$ and $|c_j|\le n$. Since $\Gamma^n$ is compact and $c_j\in\Gamma^n$, we can assume (a subnet of) $c_j\to c$ in $G$ and $c_jx\to cx\in\overline{V}$. Since $c\in\C$ by definition and $\C x\cap U\not=\emptyset$, $x$ is $\Gamma$-recurrent of type I.
The proof is complete.
\end{proof}

\begin{se}\label{4}
We are ready to state and concisely prove our main theorems of this note self-closely.
Recall that $\mathscr{O}_{G,X}\colon X\rightarrow 2^X$ is defined by $x\mapsto\overline{Gx}$ and $U_G^*=\{x\in X\,|\,\overline{Gx}\subseteq U\}$ for all subset $U$ of $X$.

\begin{4a}
Let $(G,X)$ be any flow, where $G$ is compactly generated with a generating set $\Gamma$ and $X$ is zero-dimensional. Then the following conditions are pairwise equivalent:
\begin{enumerate}[(1)]
\item $(G,X)$ is pointwise $\Gamma$-recurrent of type I.
\item $(G,X)$ is pointwise $\Gamma$-recurrent of type II.
\item $(G,X)$ is pointwise a.p.
\item $X$ is a union of minimal sets.
\item The $G$-orbit closure relation $R_o$ is closed.
\item $\mathscr{O}_{G,X}$ is $\ulcorner\!\textrm{continuous}\!\urcorner$.
\item $\mathscr{O}_{G,X}$ is upper semi-continuous.
\item Given any compact-open subset $U$ of $X$, $U_G^*$ is compact open.
\item $(G,X)$ is locally weakly a.p.
\end{enumerate}
\end{4a}

\begin{proof}
(2)$\Rightarrow$(1): By (2) of Lemma~\ref{3}.

(3)$\Rightarrow$(2): By (1) of Lemma~\ref{3}.

(4)$\Rightarrow$(3): By a) of \ref{0}.5.

(5)$\Rightarrow$(4): By Lemma~\ref{0}.4.

(6)$\Rightarrow$(5): By a) of \ref{0}.3.

(7)$\Rightarrow$(6): By Def.~\ref{0}.2.

(8)$\Rightarrow$(7): Assume (8). Let $x\in X$ and $V$ an open neighborhood of $\overline{Gx}$. Because $X$ is locally compact zero-dimensional and $\overline{Gx}$ is compact, there is a compact-open set $U$ with $\overline{Gx}\subseteq U\subseteq V$. Then $x\in\overline{Gx}\subseteq U_G^*\subseteq V$. Since $U_G^*$ is open, $\mathscr{O}_{G,X}$ is upper semi-continuous. Thus (8)$\Rightarrow$(7) holds.

(1)$\Rightarrow$(8): Assume (1). Let $U$ be a compact-open subset of $X$. Then $U_G^*$ is an $G$-invariant compact set. To prove that $U_G^*$ is open, suppose the contrary that $U_G^*$ is not open. Then $U_G^*\not=\emptyset$ and there is a net $x_i\in U\setminus U_G^*$ and a point $x\in U_G^*$ with $x_i\to x$. We can obviously take $g_i\in G$ such that
$g_i x_i\notin U$ and $|g_i|=\min\{|t|\colon t\in G, tx\notin U\}$.
Since $U$ is clopen and $\overline{Gx}\subseteq U_G^*$, obviously (a subnet of) $|g_i|\to\infty$. For all $i$ we write
$g_i=\gamma_{i,1}\gamma_{i,2}\dotsm\gamma_{i,|g_i|}$ and $g_i^\prime=\gamma_{i,2}\dotsm\gamma_{i,|g_i|}$.
Then $g_i^\prime x_i\in U$ and $\gamma_{i,1}\in\Gamma$ for all $i$. Since $U$ and $\Gamma$ are compact, we can assume (a subnet of) $\gamma_{i,1}\to\gamma$ and $g_i^\prime x_i\to y^\prime\in U$. Let $y=\gamma y^\prime$. Then $y=\lim_ig_i x_i\notin U$. Let $\K(g_i^{-1})\rightharpoonup\C$, a $\Gamma$-cone in $G$. Given $c\in \C$, we can take a subnet $\{g_{i_j}\}$ from $\{g_i\}$ and $c_j=k_jg_{i_j}^{-1}$ with $k_j\in\Gamma^{|g_{i_j}|-1}$ and $c_j\to c$. Thus by $|k_j|<|g_{i_j}|$, it follows that
$cy={\lim}_jc_jy={\lim}_j (k_jg_{i_j}^{-1})(g_{i_j}x_{i_j})\in U$. So $\C y\cap(X\setminus U)=\emptyset$, contrary to that $y$ is $\Gamma$-recurrent of type I by (1). Thus (1) implies (8).

(9)$\Rightarrow$(3): By b) of \ref{0}.5.

(7)$\Rightarrow$(9): Let $\alpha\in\mathscr{U}$ and $x\in X$. Let $V\in\mathfrak{N}_x$ be so small that $\alpha[y]\supseteq V$ for all $y\in V$. Since $\overline{Gx}$ is compact minimal, there exists a finite set $\{z_1,\dotsc,z_k\}\subset X$ with $W_i\in\mathfrak{N}_{z_i}$, $t_i\in G$ such that $t_iW_i\subseteq V$ and $\overline{Gx}\subseteq\bigcup_{i=1}^kW_i$. By (7), there is a $U\in\mathfrak{N}_x$ with $U\subseteq V$ such that $\overline{Gy}\subseteq\bigcup_{i=1}^kW_i$ for all $y\in U$. Then for $F=\{t_1,\dotsc,t_k\}$, $Fty\cap\alpha[y]\not=\emptyset$ for all $y\in U$ and $t\in T$. Thus (7) implies (9).

The proof of Theorem~\ref{4}a is thus completed.
\end{proof}

\begin{4b}
Let $(G,X)$ be any flow with $G$ compactly generated with a generating set $\Gamma$ and with $X$ zero-dimensional, where $\overline{Gx}$ need not be compact for $x\in X$.
Let $V$ be an open subset of $X$ and write $V_\infty=\bigcap_{g\in G}gV$. Suppose $V_\infty$ is compact. Then
$V_\infty$ is open if and only if $\overline{Gx}\cap V_\infty=\emptyset$ for every $x\in X\setminus V_\infty$.
\end{4b}

\begin{proof}
Necessity is obvious for $X\setminus V_\infty$ is closed $G$-invariant. To show sufficiency, suppose the contrary that $V_\infty$ is not open (so $V_\infty\not=\emptyset$). Then there is a net $x_i\in V\setminus V_\infty$ with $x_i\to x\in V_\infty$. Since $X$ is a locally compact zero-dimensional Hausdorff space, we can find a compact-open neighborhood $U$ of $V_\infty$ with $U\subseteq V$. Clearly, $V_\infty=\bigcap_{g\in G}gU$. We may assume $\{x_i\}\subset U$. Take $g_i\in G$ such that $g_ix_i\notin U$ and $|g_i|=\min\{|t|\colon t\in G, tx_i\notin U\}$. Since $V_\infty$ is compact $G$-invariant, $|g_i|\to\infty$. By compactness of $U$, we can assume (a subnet of) $g_ix_i\to y\notin U$ (cf.~Proof of (1)$\Rightarrow$(8) in Theorem~\ref{4}a). Let $\K(g_i^{-1})\rightharpoonup\C$. Then $\C y\subseteq U$ (cf. Proof of (1)$\Rightarrow$(8) in Theorem~\ref{4}a). Next we shall prove that $\overline{\C y}$ contains an $G$-orbit so $\overline{Gy}\cap V_\infty\not=\emptyset$, a contradiction.

Indeed, let $\mathcal {F}$ be the collection of finite subsets of $G$ with a partial order by inclusion. Since $\C$ is discretely thick in $G$ by Lemma~\ref{1}c, we have for $F\in\mathcal {F}$ that there exists an element $t_F\in\C$ with $Ft_F\subseteq\C$. Since $2^U$ is a compact Hausdorff space and $t_Fy\in U$ and $Ft_Fy\in 2^U$, we can assume (a subnet of) $Ft_Fy\to Q\subseteq\overline{\C y}$ and $t_Fy\to y^\prime$. Given $t\in G$ there is some $F_0\in\mathcal {F}$ with $t\in F_0$. Then $t\in F$ for all $F\ge F_0$ so that $ty^\prime=\lim_Ftt_Fy\in\lim_FFt_Fy$. Thus $ty^\prime\in\overline{\C y}$ for all $t\in G$. Then $\overline{Gy^\prime}\subseteq\overline{\C y}$.
The proof is completed.
\end{proof}
\end{se}

\begin{rem}
It should be noticed that since we do not know whether the $\Gamma$-recurrence of type I at a point of $X$ is equivalent to that of type II, so $(1)\Leftrightarrow(2)$ in Theorem~\ref{4}a is not trivial. In addition, Theorem~\ref{4}b can be utilized for proving $(5)\Rightarrow(8)$ in Theorem~\ref{4}a.
\end{rem}

\begin{se}\label{6}
Let $(G,X)$ be a flow.
\begin{6a}
We say $(G,X)$ is distal if we have for $x,y\in X$ with $x\not=y$ that $\Delta_X\cap\overline{G(x,y)}=\emptyset$.
\end{6a}

\begin{6b}
We say $(G,X)$ is equicontinuous if given $\varepsilon\in\mathscr{U}$ and $x\in X$ there exists an index $\delta\in\mathscr{U}$ such that $g\delta[x]\subseteq\varepsilon[gx]$ for all $g\in G$. If $\delta$ is independent of $x\in X$, then we say $(G,X)$ is uniformly equicontinuous in this case.
\end{6b}

Using Theorem~\ref{4}a we can then conclude the following generalization of an important theorem of R.~Ellis:

\begin{6c}[{cf.~\cite[Thm.~2]{E58} for $G$ generative and \cite[Cor.~1.9]{AGW} for $G$ finitely generated with $X$ compact}]
Let $(G,X)$ be a distal flow with $X$ zero-dimensional and $G$ compactly generated. Then $(G,X)$ is equicontinuous.
\end{6c}

\begin{proof}
Since $(G,X)$ is distal, $(G,X\times X)$ is pointwise a.p. To show $(G,X)$ equicontinuous, suppose the contrary that $(G,X)$ is not equicontinuous; then there is a point $x\in X$, an index $\varepsilon\in\mathscr{U}$, a net $x_n\to x$ and a net $t_n\in G$ such that $(t_nx,t_nx_n)\notin\varepsilon$. By (7) of Theorem~\ref{4}a, we can assume $t_nx\to x_*$ and $t_nx_n\to y_*$ with $(x_*,y_*)\notin\varepsilon$. Let $s_n=t_n^{-1}$ and $x_n^\prime=t_nx$, $y_n^\prime=t_nx_n$. Then $(s_nx_n^\prime,s_ny_n^\prime)\to(x,x)$ and $(x_n^\prime,y_n^\prime)\to(x_*,y_*)$. Using (7) of Theorem~\ref{4}a for $(G,X\times X)$, it follows that $x_*=y_*$, contrary to $(x_*,y_*)\notin\varepsilon$. The proof is complete.
\end{proof}
\end{se}

\begin{se}[{The case $G$ to be finitely generated \cite{AGW}}]\label{7}
Let $G$ be finitely generated with a generator set $\Gamma$ such that $\Gamma=\Gamma^{-1}$, and let $(G,X)$ be a flow. We shall show that our $\Gamma$-recurrence of type I (Def.~\ref{2}a) is consistent with \cite{AGW} in this case. See Lemma~\ref{7}d below.

\begin{7a}
Let $B_r=\{g\in G\,|\,g\not=e, |g|\le r\}$, for all $r\ge1$. For $g\in G$ with $g\not=e$, write $K(g)=B_{|g|-1}\cdot g$.
\end{7a}

\begin{7b}
A subset $C$ of $G$ is an AGW-\textit{cone}, namely, a cone in the sense of Auslander, Glasner and Weiss~\cite{AGW}, if there is a sequence $g_n\in G$ with $|g_n|\to\infty$ such that for each $r\ge1$ there exists $n_r$
such that $B_r\cap K(g_n)$ is independent of $n$ for all $n\ge n_r$, and, $C=\lim_{n\to\infty}K(g_n)$. Since $G$ is discrete here, so $c\in C$ iff $c\in K(g_n)$ as $n$ sufficiently large. Moreover, $e\notin C$.
\end{7b}

\begin{7c}[{cf.~\cite[Def.~1.6]{AGW}}]
We say that $x\in X$ is AGW-\textit{recurrent}, if $Cx\cap U\not=\emptyset$ for every $U\in\mathfrak{N}_x$ and every AGW-cone $C$ in $G$.
\end{7c}

\begin{7d}
Let $X$ be a compact metric space and $x\in X$. Then $x$ is AGW-recurrent under $(G,X)$ if and only if $x$ is $\Gamma$-recurrent of type I in the sense of Def.~\ref{2}a.
\end{7d}

\begin{proof}
1). Let $\C$ be a $\Gamma$-cone in $G$ with $\K(g_i)=\Gamma^{|g_i|-1}\cdot g_i\rightharpoonup\C$, where $\{g_i\}$ is a net in $G$ such that $|g_i|\nearrow\infty$.
For all integer $k\ge1$, there is an $i_k\in\{i\}$ such that $|g_i|\ge k$ for all $i\ge i_k$ and $|g_{i_k}|<|g_{i_{k+1}}|$.
Clearly $|g_{i_k}|\to\infty$ as $k\to\infty$. Moreover, the sequence $\{g_{i_k}\}_{k=1}^\infty$ is a subnet of $\{g_i\}$. Indeed, for $i^\prime\in\{i\}$, there is an integer $k\ge1$ with $|g_{i^\prime}|<k\le |g_{i_{k^\prime}}|$ for all $k^\prime>k$ so $i^\prime\le i_{k^\prime}$ for all $k^\prime>k$.
Therefore, using $\{i_k\}$ in place of $\{i\}$ if necessary, we can assume $\{g_i\}_{i=1}^\infty$ is a sequence in $G$ with $|g_i|\nearrow\infty$ as $i\to\infty$.

Each $B_r$ is finite, so we may choose a subsequence $\{i_k\}$ from $\{i\}$ so that each $K(g_{i_k})\cap B_r$ is eventually constant. Then by a diagonal process we can choose a subsequence (and relabel) $\{i_k\}_{k=1}^\infty$ from $\{i\}$ such that $K(g_{i_k})\to C$ as $k\to\infty$. Then $C$ is an AGW-cone in $G$ such that $C\subseteq\C$ by Lemma~\ref{0}.1.

2). Let $C$ be a cone in $G$ with $C=\lim_iK(g_i)$ in the sense of Def.~\ref{7}b, where $\{g_i\}_{i=1}^\infty$ is a sequence in $G$ such that $|g_i|\to\infty$ and for each $r\ge1$ there exists an $n_r\ge1$ such that $B_r\cap K(g_i)$ is independent of $i$ for all $i\ge n_r$. By induction, we may require $n_r<n_{r+1}$ and $|g_{n_r}|<|g_{n_{r+1}}|$ for $r\ge1$. Then
$$
C={\bigcup}_{r=1}^\infty(B_r\cap K(g_{n_r})).
$$
Let $\K(g_{n_r})\rightharpoonup\C$ in the sense of Def.~\ref{1}b. Let $c\in\C$. Then there exists a subnet $\{n_{r^\prime}\}$ from $\{n_r\}$ and $k_{n_{r^\prime}}\in\K(g_{n_{r^\prime}})$ such that $k_{n_{r^\prime}}\to c$. Since $G$ is discrete and $\{c\}$ is an open neighborhood of $c$ in $G$, we can assume $k_{n_{r^\prime}}=c$ for all $r^\prime$. Since we have for all $n_r$ that $n_{r^\prime}\ge n_r$ as $r^\prime$ sufficiently large, so $c\in C$ and $\C\subseteq C$.

The above discussion of 1) and 2) implies that $x$ is AGW-recurrent iff $x$ is also $\Gamma$-recurrent of type I. The proof is complete.
\end{proof}

\begin{7e}[{cf.~\cite[Thm.~1.8]{AGW}}]
Let $(G,X)$ be a flow, where $G$ is finitely generated and $X$ is a zero-dimensional compact metric space. Then the following conditions are equivalent:
\begin{enumerate}[(i)]
\item $(G,X)$ is pointwise AGW-recurrent.
\item $(G,X)$ is pointwise a.p.
\item The $G$-orbit closure relation is closed.
\end{enumerate}
\end{7e}

\begin{proof}
In view of Lemma~\ref{7}d, this is a special case of Theorem~\ref{4}a.
\end{proof}

\begin{7f}
Let $f\colon X\rightarrow X$ be a homeomorphism of $X$, which is thought of as a $\mathbb{Z}$-flow. Then $x\in X$ is recurrent of type I (cf.~Def.~\ref{2}a) iff there exists a net $\{i_n\}$ in $\mathbb{Z}$ with $i_n\to\infty$ and a net $\{j_n\}$ in $\mathbb{Z}$ with $j_n\to-\infty$ such that $f^{i_n}x\to x$ and $f^{j_n}x\to x$ simultaneously.
\end{7f}

\begin{proof}
Note that a $\Gamma$-cone $\C$ in $\mathbb{Z}$ with $\Gamma=\{-1,0,1\}$ is either $\C=\mathbb{N}$ or $\C=-\mathbb{N}$, where $\mathbb{N}$ is the set of positive integers. Then the statement follows easily from this fact.
\end{proof}

\begin{7g}[{Regular almost periodicity~\cite{GH}}]
A point $x$ is said to be \textit{regularly a.p} under $(G,X)$ if $N_G(x,U)$ contains a syndetic subgroup of $G$ for every $U\in\mathfrak{N}_x$. We say $(G,X)$ is \textit{regularly a.p} if every $\alpha\in\mathscr{U}$ there is a syndetic subgroup $A$ of $G$ such that $Ax\subseteq\alpha[x]$ for all $x\in X$.
\end{7g}

\begin{7h}[{cf.~\cite[Thm.~7.10]{GH} for $G$ abelian finitely generated}]
Let $(G,X)$ be a flow with $G$ finitely generated and with $X$ zero-dimensional compact. If $(G,X)$ is pointwise regularly a.p, then $(G,X)$ is regularly a.p.
\end{7h}

\begin{proof}
By \cite[Lemma~5.04]{GH}, $(G,X)$ is distal. Further by Corollary~\ref{6}c, it follows that $(G,X)$ is equicontinuous. Finally by \cite[Remark~5.02, Theorem~5.17]{GH}, we see that $(G,X)$ is regularly a.p. The proof is complete.
\end{proof}

\begin{7i}[{cf.~\cite[Thm.~7.11]{GH} for $G$ abelian finitely generated}]
Let $(G,X)$ be a flow with $G$ finitely generated and with $X$ a compact metric space. Then $(G,X)$ is regularly a.p if and only if $(G,X)$ is pointwise regularly a.p and weakly a.p.
\end{7i}

\begin{proof}
The proof of \cite[Thm.~7.11]{GH} is still valid for this case using Theorem~\ref{4}a in place of \cite[Theorem~7.08]{GH}. We omit the details here.
\end{proof}
\end{se}

\begin{se}[The case $G$ to be equicontinuously generated]\label{8}
Let $X$ be a locally compact Hausdorff space, $C_u(X,X)$ the space of continuous mappings from $X$ to itself equipped with the topology of uniformly convergence, $\textrm{Homeo}\,(X)$ the group of self-homeomorphisms of $X$ onto itself as a subspace of $C_u(X,X)$. Clearly, $\textrm{Homeo}\,(X)$ is a Hausdorff semi-topological group so that $(\textrm{Homeo}\,(X),X)$ defined by $\textrm{Homeo}\,(X)\times X\xrightarrow{(f,x)\mapsto fx}X$ is a flow.

In the sequel let $S\subset\textrm{Homeo}\,(X)$ such that: 1) $e=\textrm{id}_X\in S=S^{-1}$ and 2) $S$ is equicontinuous on $X$ (cf.~Def.~\ref{6}b).

\begin{8a}
Set $\langle S\rangle=\bigcup_{r=1}^\infty S^r$. Clearly $\langle S\rangle$ is a subgroup of $\textrm{Homeo}\,(X)$, which is said to be \textit{equicontinuously generated with the generating set $S$}.
\end{8a}

\begin{8b}
Write $\Gamma=\textrm{cls}_uS$, where $\textrm{cls}_u$ denotes the closure relative to $C_u(X,X)$. Then:

\begin{lem*}
$\Gamma$ is a compact subset of $C_u(X,X)$ and $\Gamma\subset\textrm{Homeo}\,(X)$ such that $e\in\Gamma=\Gamma^{-1}$.
\end{lem*}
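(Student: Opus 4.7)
The claim is that $\Gamma=\textrm{cls}_u S$ satisfies four properties: $e\in\Gamma$, $\Gamma\subset\textrm{Homeo}\,(X)$, $\Gamma^{-1}=\Gamma$, and $\Gamma$ is compact in $C_u(X,X)$. The first is immediate from $e=\textrm{id}_X\in S\subset\textrm{cls}_u S=\Gamma$. Next I would observe that $\Gamma$ itself is equicontinuous, by the standard three-entourage argument: given $\varepsilon\in\mathscr{U}$ and $x\in X$, choose a symmetric $\varepsilon'\in\mathscr{U}$ with $\varepsilon'\circ\varepsilon'\circ\varepsilon'\subseteq\varepsilon$, let $\delta\in\mathscr{U}$ witness equicontinuity of $S$ at $x$ for $\varepsilon'$, and for any $f\in\Gamma$ pick some $f_\alpha\in S$ uniformly $\varepsilon'$-close to $f$ to conclude $f\delta[x]\subseteq\varepsilon[f(x)]$.

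For compactness, $\Gamma$ is uniformly closed in $C_u(X,X)$ by construction, so an Arzel\`a--Ascoli theorem in the uniform-space setting reduces the task to showing that $\{f(x):f\in\Gamma\}$ is relatively compact in $X$ for each $x\in X$.

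For $\Gamma\subset\textrm{Homeo}\,(X)$ together with $\Gamma^{-1}=\Gamma$, take any $f\in\Gamma$ with a net $f_\alpha\in S$ converging uniformly to $f$. Since $S=S^{-1}$, the inverses $f_\alpha^{-1}$ lie in $S\subseteq\Gamma$, so by compactness of $\Gamma$ a subnet satisfies $f_{\alpha'}^{-1}\to g\in\Gamma$ uniformly. Joint continuity of composition on the equicontinuous family $\Gamma\times\Gamma$ then permits passage to the limit in $f_{\alpha'}\circ f_{\alpha'}^{-1}=\textrm{id}_X=f_{\alpha'}^{-1}\circ f_{\alpha'}$, yielding $f\circ g=g\circ f=\textrm{id}_X$. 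Hence $f$ is a self-homeomorphism of $X$ with $f^{-1}=g\in\Gamma$, which proves both $\Gamma\subset\textrm{Homeo}\,(X)$ and $\Gamma^{-1}\subseteq\Gamma$; the reverse inclusion follows by symmetry (replace $f$ by its inverse).

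The main obstacle I foresee is the pointwise relative compactness of $\{f(x):f\in\Gamma\}$ in $X$ needed for the Arzel\`a--Ascoli step, since equicontinuity alone does not force this. The natural route exploits $e\in S$, $S=S^{-1}$, local compactness of $X$, and equicontinuity at $x$: a compact neighborhood $K$ of $x$ transports under $S$ with controlled entourage deviation, and the symmetry $S=S^{-1}$ prevents the orbit $\{f(x):f\in S\}$ from escaping a fixed compact enlargement of $K$. Once this is secured, the three main steps above are essentially formal.
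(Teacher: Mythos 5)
Your treatment of $e\in\Gamma$, of $\Gamma\subset\textrm{Homeo}\,(X)$ and of $\Gamma=\Gamma^{-1}$ is sound and in fact tidier than the paper's: the paper proves injectivity of a uniform limit $f$ directly from equicontinuity of $S$, proves surjectivity by extracting a convergent subnet of $f_n^{-1}x$ inside the compact set $\Gamma x$, and only afterwards passes to the limit in $f_nf_n^{-1}=e$ to identify $f^{-1}=g\in\Gamma$; your single limit passage in $f_{\alpha'}\circ f_{\alpha'}^{-1}=\textrm{id}_X=f_{\alpha'}^{-1}\circ f_{\alpha'}$ yields injectivity, surjectivity and $\Gamma^{-1}=\Gamma$ in one stroke. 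Both routes, however, consume the compactness of $\Gamma$ (or at least of $\Gamma x$), so everything hinges on the Ascoli step.

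That step is where your proposal has a genuine gap, and the patch you sketch would fail. The pointwise relative compactness of $\Gamma x=\{f(x):f\in\Gamma\}$ --- hypothesis (b) of the Ascoli theorem in \cite{K} that the paper invokes --- is \emph{not} a consequence of $e\in S=S^{-1}$, equicontinuity of $S$, and local compactness of $X$. Take $X=\mathbb{Z}$ with the discrete uniformity (locally compact, even zero-dimensional) and let $S$ be the set of all integer translations: $S$ is symmetric, contains the identity, and is equicontinuous, yet $Sx=\mathbb{Z}$ escapes every compact set, and $\Gamma=\textrm{cls}_uS=S$ is an infinite closed discrete, hence non-compact, subset of $C_u(X,X)$. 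So the claim that ``the symmetry $S=S^{-1}$ prevents the orbit from escaping a fixed compact enlargement of $K$'' is simply false, and the lemma is not provable from the ingredients you allowed yourself. The relative compactness of $\Gamma x$ must be imported from the standing Poisson-stability axiom (iii) in the paper's definition of a flow --- i.e.\ $\overline{\langle S\rangle x}$ compact for every $x$, as assumed explicitly in Theorem~\ref{8}f --- via the inclusion $\Gamma x\subseteq\overline{Sx}\subseteq\overline{\langle S\rangle x}$. (The paper's own proof is terse on this point too: it cites Ascoli and only then records ``$\Gamma x$ is compact'' as a consequence, but the orbit-compactness it needs is available there as a standing hypothesis.) Once you invoke that hypothesis, your argument closes; without it, the compactness assertion, and with it your derivation of $\Gamma\subset\textrm{Homeo}\,(X)$, collapses.
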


\begin{proof}
Since $S$ is equicontinuous, so is $\Gamma$. Then $\Gamma$ is compact in $C_u(X,X)$ by Ascoli's Theorem (cf.~\cite[Theorem~7.17]{K}). Moreover, $\Gamma x$ is compact in $X$. We need prove that $\Gamma\subset\textrm{Homeo}\,(X)$. For this, let $S\ni f_n\to f\in\Gamma$. 

Let $x\not=y$ and $\varepsilon\in\mathscr{U}$ with $(x,y)\notin\varepsilon$. If $fx=fy=z$, then for $\delta\in\mathscr{U}$ we have $x, y\in f_n^{-1}\delta[z]$ as $n$ sufficiently large, contrary to equicontinuity of $S$. Thus $f$ is injective.
Let $x\in X$. Since $f_nX=X$, there is a point $x_n\in X$ with $f_nx_n=x$ or $x_n=f_n^{-1}x$. We can assume (a subnet of) $x_n\to x^\prime$. Then $fx^\prime=x$ so $fX=X$. Thus $f$ is a continuous bijection.

Let (a subnet of) $f_n^{-1}\in S\to g\in\Gamma$. Since $e=f_nf_n^{-1}\to fg$, so $fg=e$, $g=f^{-1}\in\Gamma$, and  $f\in\textrm{Homeo}\,(X)$. The proof is complete.
\end{proof}
\end{8b}

\begin{8c}
Set $\langle\Gamma\rangle=\bigcup_{r=1}^\infty \Gamma^r$. Then $\langle\Gamma\rangle$ is a compactly generated subgroup of $\textrm{Homeo}\,(X)$ with a generating set $\Gamma$. Moreover, $(\langle\Gamma\rangle,X)$ is a flow. It is evident that $S^r$ is dense in $\Gamma^r$ for all $r\ge1$ and $\langle S\rangle$ is dense in $\langle\Gamma\rangle$ under the topology of uniform convergence.
\end{8c}

Following Def.~\ref{2}b, $x\in X$ is $S$-recurrent of type II under $(\langle S\rangle,X)$ if for every $U\in\mathfrak{N}_x$ and every net $\{g_i\}$ in $\langle S\rangle$ with $|g_i|\to\infty$, there is an integer $n$, a subnet $\{g_{i_j}\}$ from $\{g_i\}$ and $c_j\in S^{|g_{i_j}|-1}\cdot g_{i_j}$ such that $c_jx\in U$ and $|c_j|\le n$. We do not know if a point $S$-recurrent of type II is $\Gamma$-recurrent of type II, yet it is $\Gamma$-recurrent of type I by Lemma~\ref{8}d below.

\begin{8d}
If $x$ is $S$-recurrent of type II under $(\langle S\rangle,X)$, then $x$ is $\Gamma$-recurrent of type I under $(\langle\Gamma\rangle,X)$.
\end{8d}

\begin{proof}
Let $\{g_i\}$ be a net in $\langle\Gamma\rangle$ with $|g_i|\nearrow\infty$ and $\K(g_i)\rightharpoonup\C$. Let $U, V\in\mathfrak{N}_x$ with $\overline{V}\subseteq U$. We need prove that $\C x\cap U\not=\emptyset$. Since $\langle S\rangle$ is dense in $\langle\Gamma\rangle$, we can assume $g_i\in\langle S\rangle$ for all $i$. Further by definition, there is an integer $n\ge1$, a subnet $\{g_{i_j}\}$ from $\{g_i\}$ and $c_j\in S^{|g_{i_j}|-1}\cdot g_{i_j}\subseteq\K(g_{i_j})$ such that $c_jx\in V$ and $|c_j|\le n$. By $c_j\in\Gamma^n$ and $\Gamma^n$ is compact, it follows that (a subnet of) $c_j\to c\in\C$ and $cx\in U$. The proof is complete.
\end{proof}

\begin{8e}[{cf.~\cite[Thm.~2.2]{R1}, \cite[Thm.~1.2]{R2} using compact-open invariant sets}]
Let $X$ be zero-dimensional, let $G$ be an equicontinuously generated group. Let $V$ be an open subset of $X$ and $V_\infty=\bigcap_{g\in G}gV$. Suppose $V_\infty$ is compact. Then $V_\infty$ is open if and only if $\overline{Gx}\cap V_\infty=\emptyset$ for every $x\in X\setminus V_\infty$.
\end{8e}

\begin{proof}
Let $G=\langle S\rangle$, where $S$ is the equicontinuously generating set of $G$. Then $\overline{Gx}=\overline{\langle\Gamma\rangle x}$ for all $x\in X$. Since $V_\infty$ is compact closed, so $V_\infty=\bigcap_{g\in\langle\Gamma\rangle}gV$. Then Theorem~\ref{8}e follows from Theorem~\ref{4}b.
\end{proof}

\begin{8f}[{cf.~\cite[Thm.~1.3]{R2} using Thm.~\ref{8}e}]
Let $X$ be zero-dimensional and $G=\langle S\rangle$ such that $\overline{Gx}$ is compact for all $x\in X$. Then the following are pairwise equivalent:
\begin{enumerate}[i)]
\item $(G,X)$ is pointwise $S$-recurrent of type II.
\item $(G,X)$ is pointwise a.p.
\item Given any compact open subset $V$ of $X$, $V_G^*$ is open.
\item The orbit closure relation $R_o$ of $(G,X)$ is closed.
\item There exists an extension $\rho\colon(G,X)\rightarrow(G,Y)$ such that: 
\begin{enumerate}[1)]
\item $Y$ is locally compact zero-dimensional, 
\item $gy=y$ for all $y\in Y$ and $g\in G$, and 
\item $\rho^{-1}y$ is a minimal subset of $(G,X)$ for each $y\in Y$.
\end{enumerate}
\end{enumerate}
\end{8f}

\begin{proof}
By Lemma~\ref{8}d, $(\langle\Gamma\rangle,X)$ is pointwise $\Gamma$-recurrent of type I.
Since $\overline{Gx}=\overline{\langle \Gamma\rangle x}$ for all $x\in X$, $x$ is a.p under $(G,X)$ iff $x$ is a.p under $(\langle \Gamma\rangle,X)$. Then
i)$\Leftrightarrow$ii)$\Leftrightarrow$iii)$\Leftrightarrow$iv)$\Leftrightarrow$iv$^\prime$) by Theorem~\ref{4}a, where
\begin{enumerate}[iv$^\prime$)]
\item $\mathscr{O}_{G,X}\colon X\rightarrow 2^X$ is upper semi-continuous.
\end{enumerate}
v)$\Rightarrow$ii): This is evident by 3) of v).

\noindent
iv$^\prime$)$\Rightarrow$v): Assume iv$^\prime$). Define $Y=X/R_o$ and let $\rho\colon X\rightarrow Y$ be the canonical quotient mapping. Then $\rho$ is closed (cf.~\cite[Thm.~3.12]{K}), and moreover, by iii) it is easy to see that $Y$ is a Hausdorff space. Let $y\in Y$ and $U\in\mathfrak{N}_y$. Set $V=\rho^{-1}U$ and $x\in\rho^{-1}y$. Then $\overline{Gx}=\rho^{-1}y\subseteq V$. Further, there exists a compact open neighborhood $W$ of $\overline{Gx}$. By iii), $W_G^*$ is compact open $\rho$-saturated with $x\in W_G^*$. Thus $\rho W_G^*\subseteq U$ is a clopen compact neighborhood of $y$ in $Y$. This shows that $Y$ is locally compact zero-dimensional. Thus $(G,Y)$ is a flow having the properties 1), 2) and 3).

The proof of Theorem~\ref{8}f is thus completed.
\end{proof}
\end{se}

\begin{rem}
The $\Gamma$-recurrence of type I (Def.~\ref{2}) is conceptually dependent of the generating set $\Gamma$ of $G$. So it should be interested to generalize Proposition~\ref{7}f from $G=\mathbb{Z}$ to a more general case.
\end{rem}
\medskip

\noindent
\textbf{Acknowledgments. }
This work was supported by National Natural Science Foundation of China (Grant No. 11790274) and PAPD of Jiangsu Higher Education Institutions.

\end{document}